\DeclareFontFamily{U}{ntxmia}{\skewchar \font =127}
 \DeclareFontShape{U}{ntxmia}{m}{it}{
                        <-> \ntxmath@scaled ntxmia
                      }{}    
                      \DeclareFontShape{U}{ntxmia}{b}{it}{
                        <-> \ntxmath@scaled ntxbmia
                      }{}
\def\NAT@spacechar{~}
\newcommand{\urlprefix}{}
\crefname{figure}{figure}{figures}
\crefname{claim}{Claim}{Claims}
\crefname{conjecture}{Conjecture}{Conjectures}
\crefname{figure}{Figure}{Figures}
\crefname{claim}{claim}{claims}
\crefname{conjecture}{conjecture}{conjectures}
\Crefname{figure}{Figure}{Figures}
\Crefname{claim}{Claim}{Claims}
\crefname{conjecture}{Conjecture}{Conjectures}
\newtheorem{theorem}{Theorem}
\newtheorem{lemma}[theorem]{Lemma}
\theoremstyle{definition}
\numberwithin{equation}{section}
\renewcommand{\binom}[2]{\ensuremath{\mleft(\kern-.1em\genfrac{}{}{0pt}{}{#1}{#2}\kern-.1em\mright)}}    
\newcommand{\inbinom}[2]{\ensuremath{\bigl(\kern-.1em\genfrac{}{}{0pt}{}{#1}{#2}\kern-.1em\bigr)}} 
\newcommand{\cC}{\mathcal{C}}
\newcommand{\cF}{\mathcal{F}}
\newcommand{\cP}{\mathcal{P}}
\newcommand{\cQ}{\mathcal{Q}}
\newcommand{\cT}{\mathcal{T}}
\DeclareSymbolFont{sfletters}{OML}{cmbrm}{m}{it}
\DeclareMathSymbol{\salpha}{\mathord}{sfletters}{"0B}
\DeclareMathSymbol{\sbeta}{\mathord}{sfletters}{"0C}
\DeclareMathSymbol{\sgamma}{\mathord}{sfletters}{"0D}
\DeclareMathSymbol{\sdelta}{\mathord}{sfletters}{"0E}
\DeclareMathSymbol{\sepsilon}{\mathord}{sfletters}{"0F}
\DeclareMathSymbol{\szeta}{\mathord}{sfletters}{"10}
\DeclareMathSymbol{\seta}{\mathord}{sfletters}{"11}
\DeclareMathSymbol{\stheta}{\mathord}{sfletters}{"12}
\DeclareMathSymbol{\siota}{\mathord}{sfletters}{"13}
\DeclareMathSymbol{\skappa}{\mathord}{sfletters}{"14}
\DeclareMathSymbol{\slambda}{\mathord}{sfletters}{"15}
\DeclareMathSymbol{\smu}{\mathord}{sfletters}{"16}
\DeclareMathSymbol{\snu}{\mathord}{sfletters}{"17}
\DeclareMathSymbol{\sxi}{\mathord}{sfletters}{"18}
\DeclareMathSymbol{\spi}{\mathord}{sfletters}{"19}
\DeclareMathSymbol{\srho}{\mathord}{sfletters}{"1A}
\DeclareMathSymbol{\ssigma}{\mathord}{sfletters}{"1B}
\DeclareMathSymbol{\stau}{\mathord}{sfletters}{"1C}
\DeclareMathSymbol{\supsilon}{\mathord}{sfletters}{"1D}
\DeclareMathSymbol{\sphi}{\mathord}{sfletters}{"1E}
\DeclareMathSymbol{\schi}{\mathord}{sfletters}{"1F}
\DeclareMathSymbol{\spsi}{\mathord}{sfletters}{"20}
\DeclareMathSymbol{\somega}{\mathord}{sfletters}{"21}
\DeclareSymbolFont{sfgreek}{LGR}{cmss}{m}{n}
\DeclareMathSymbol{\ssalpha}{\mathord}{sfgreek}{`a}
\DeclareMathSymbol{\ssbeta}{\mathord}{sfgreek}{`b}
\DeclareMathSymbol{\ssgamma}{\mathord}{sfgreek}{`g}
\DeclareMathSymbol{\ssdelta}{\mathord}{sfgreek}{`d}
\DeclareMathSymbol{\ssepsilon}{\mathord}{sfgreek}{`e}
\DeclareMathSymbol{\sszeta}{\mathord}{sfgreek}{`z}
\DeclareMathSymbol{\sseta}{\mathord}{sfgreek}{`h}
\DeclareMathSymbol{\sstheta}{\mathord}{sfgreek}{`j}
\DeclareMathSymbol{\ssiota}{\mathord}{sfgreek}{`i}
\DeclareMathSymbol{\sskappa}{\mathord}{sfgreek}{`k}
\DeclareMathSymbol{\sslambda}{\mathord}{sfgreek}{`l}
\DeclareMathSymbol{\ssmu}{\mathord}{sfgreek}{`m}
\DeclareMathSymbol{\ssnu}{\mathord}{sfgreek}{`n}
\DeclareMathSymbol{\ssxi}{\mathord}{sfgreek}{`x}
\DeclareMathSymbol{\ssomicron}{\mathord}{sfgreek}{`o}
\DeclareMathSymbol{\sspi}{\mathord}{sfgreek}{`p}
\DeclareMathSymbol{\ssrho}{\mathord}{sfgreek}{`r}
\DeclareMathSymbol{\sssigma}{\mathord}{sfgreek}{`s}
\DeclareMathSymbol{\sstau}{\mathord}{sfgreek}{`t}
\DeclareMathSymbol{\ssupsilon}{\mathord}{sfgreek}{`u}
\DeclareMathSymbol{\ssphi}{\mathord}{sfgreek}{`f}
\DeclareMathSymbol{\sschi}{\mathord}{sfgreek}{`q}
\DeclareMathSymbol{\sspsi}{\mathord}{sfgreek}{`y}
\DeclareMathSymbol{\ssomega}{\mathord}{sfgreek}{`w}
\DeclareMathSymbol{\ssvarsigma}{\mathord}{sfgreek}{`c}
\DeclareMathSymbol{\ssGamma}{\mathalpha}{sfgreek}{`G}
\DeclareMathSymbol{\ssDelta}{\mathalpha}{sfgreek}{`D}
\DeclareMathSymbol{\ssTheta}{\mathalpha}{sfgreek}{`J}
\DeclareMathSymbol{\ssLambda}{\mathalpha}{sfgreek}{`L}
\DeclareMathSymbol{\ssXi}{\mathalpha}{sfgreek}{`X}
\DeclareMathSymbol{\ssPi}{\mathalpha}{sfgreek}{`P}
\DeclareMathSymbol{\ssSigma}{\mathalpha}{sfgreek}{`S}
\DeclareMathSymbol{\ssUpsilon}{\mathalpha}{sfgreek}{`U}
\DeclareMathSymbol{\ssPhi}{\mathalpha}{sfgreek}{`F}
\DeclareMathSymbol{\ssPsi}{\mathalpha}{sfgreek}{`Y}
\DeclareMathSymbol{\ssOmega}{\mathalpha}{sfgreek}{`W}
\DeclareRobustCommand{\msf}[1]{%
  \ifcat\noexpand#1\relax\msfgreek{#1}\else\mathsf{#1}\fi
}
\newcommand{\msfgreek}[1]{\csname ss\expandafter\@gobble\string#1\endcsname}
\def\moverlay{\mathpalette\mov@rlay}
\def\mov@rlay#1#2{\leavevmode\vtop{%
  \baselineskip\z@skip \lineskiplimit-\maxdimen
  \ialign{\hfil$\m@th#1##$\hfil\cr#2\crcr}}}
\newcommand{\charfusion}[3][\mathord]{
    #1{\ifx#1\mathop\vphantom{#2}\fi
        \mathpalette\mov@rlay{#2\cr#3}
      }
    \ifx#1\mathop\expandafter\displaylimits\fi}
\newcommand{\vect}[1]{\boldsymbol{\msf{#1}}}
\newcommand{\Mod}[1]{\ (\mathrm{mod}\ #1)}
\DeclareMathOperator{\odd}{odd}
\renewcommand{\epsilon}{\varepsilon}
\renewcommand{\ge}{\geqslant}
\renewcommand{\le}{\leqslant}
\renewcommand{\geq}{\geqslant}
\renewcommand{\leq}{\leqslant}
\DeclarePairedDelimiter{\abs}{\lvert}{\rvert}
\DeclarePairedDelimiter{\norm}{\lVert}{\rVert}
\DeclarePairedDelimiter{\set}{\{}{\}}
\newcommand{\defn}[1]{\textcolor{Maroon}{\emph{#1}}}
\newcommand*{\ints}{\mathbb{Z}}
\newcommand*{\rationals}{\mathbb{Q}}
\newcommand*{\reals}{\mathbb{R}}
\title{Exponential odd-distance sets under the Manhattan metric}
\author[Espuny D\'iaz]{Alberto Espuny D\'iaz}
\address[Espuny D\'iaz]{Institut f\"ur Informatik, Universit\"at Heidelberg, 69120 Heidelberg, Germany.}
\email{espuny-diaz@informatik.uni-heidelberg.de}
\author[Hogan]{Emma Hogan}
\address[Hogan, Michel]{Mathematical Institute, University of Oxford, United Kingdom.}
\email{\{hogan,michel\}@maths.ox.ac.uk}
\author[Illingworth]{Freddie Illingworth}
\address[Illingworth]{Department of Mathematics, University College London, United Kingdom.}
\email{f.illingworth@ucl.ac.uk \vspace{-8pt}}
\author[Michel]{Lukas Michel}
\author[Portier]{Julien Portier}
\address[Portier]{Department of Pure Mathematics and Mathematical Statistics (DPMMS), University of Cambridge, United Kingdom.}
\email{jp899@cam.ac.uk}
\author[Yan]{Jun Yan}
\address[Yan]{Mathematics Institute, University of Warwick, United Kingdom.}
\email{jun.yan@warwick.ac.uk}
\thanks{A.~Espuny Díaz was funded by the Deutsche Forschungsgemeinschaft (DFG, German Research Foundation) through project no.\ 513704762. E.~Hogan's research was supported by EPSRC grant EP/W524311/1. F.~Illingworth's research was supported by EPSRC grant EP/V521917/1 and the Heilbronn Institute for Mathematical Research. J.~Yan was supported by the Warwick Mathematics Institute CDT and funding from the EPSRC grant EP/W523793/1.}
\date{\today}
\begin{document}

\begin{abstract}
We construct a set of $2^n$ points in $\mathbb{R}^n$ such that all pairwise Manhattan distances are odd integers, which improves the recent linear lower bound of Golovanov, Kupavskii and Sagdeev. In contrast to the Euclidean and maximum metrics, this shows that the odd-distance set problem behaves very differently to the equilateral set problem under the Manhattan metric. Moreover, all coordinates of the points in our construction are integers or half-integers, and we show that our construction is optimal under this additional restriction.
\end{abstract}


\maketitle

\section{Introduction}\label{sec:intro}

An \defn{equilateral set} in a metric space is a set of points in which all pairwise distances are the same. Determining the largest equilateral set in a metric space has been well-studied and the case of $\reals^n$ equipped with the $\ell_p$-norm\footnote{A point $\vect{v} = (v_1, \dotsc, v_n) \in \reals^n$ has $\ell_p$-norm $\norm{\vect{v}}_p \coloneqq \bigl(\sum_i \abs{v_i}^p\bigr)^{1/p}$ for $1 \leq p < \infty$ and has $\ell_\infty$-norm $\norm{\vect{v}}_\infty \coloneqq \max_i \abs{v_i}$.} $\norm{\cdot}_p$ has received significant attention. Denoting the size of the largest equilateral set in $\reals^n$ with the $\ell_p$-norm by $e_p(n)$, the following table gives a summary of the best bounds known for $p \in \{ 1, 2, \infty \}$.

\begin{table}[H]
    \centering
    \begin{tabular}{lllll}
        \toprule
        & \multicolumn{2}{l}{Lower bound} & \multicolumn{2}{l}{Upper bound} \\
        \midrule
        $e_1(n)$ & $2n$ & cross polytope & $cn \log n$ & \citet{AP03} \\
        $e_2(n)$ & $n + 1$ & unit simplex & $n + 1$ & folklore \\
        $e_\infty(n)$ & $2^n$ & unit hypercube & $2^n$ & \citet{P71} \\
        \bottomrule
    \end{tabular}
    \caption{The size of the largest equilateral sets in $(\reals^n, \norm{\cdot}_p)$.}
\end{table}

An old conjecture of Kusner~\cite{Kus83} states that $e_p(n) = n + 1$ for all integers $1 < p < \infty$ and $e_1(n) = 2n$. For progress on Kusner's conjecture, see for example \cite{AP03,Swa04,Smy13}.

\Citet{GRS74} introduced the following relaxation of an equilateral set. An \defn{odd-distance set} in a metric space is a set of points in which all pairwise distances are odd integers. We denote the size of the largest odd-distance set in $\reals^n$ equipped with the $\ell_p$-norm by $\odd_p(n)$. In a normed space, any equilateral set can be scaled to a set where all pairwise distances are $1$, and so $\odd_p(n) \geq e_p(n)$. The following table gives a summary of the best bounds known for $\odd_p(n)$. The bounds for $\odd_2(n)$ were proved in the original paper of \citet{GRS74}, who in fact determined the exact value of $\odd_2(n)$ (it is $n + 1$ except when $n \equiv -2 \pmod{16}$). The other bounds are due to a recent paper of \citet{GKS23}.

\begin{table}[H]
    \centering
    \begin{tabular}{lllll}
        \toprule
        & \multicolumn{2}{l}{Lower bound} & \multicolumn{2}{l}{Upper bound} \\
        \midrule
        $\odd_1(n)$ & $(\frac{7}{3} - o(1))n$  & \cite{GKS23} & $(4 + o(1)) n! \, n \log n$ & \cite{GKS23} \\
        $\odd_2(n)$ & $n + 1$ & unit simplex & $n + 2$ & \cite{GRS74} \\
        $\odd_\infty(n)$ & $2^n$ & unit hypercube & $2^n$ & \cite{GKS23} \\
        \bottomrule
    \end{tabular}
    \caption{The size of the largest odd-distance sets in $(\reals^n, \norm{\cdot}_p)$.}
\end{table}

From these results we see that relaxing from the equilateral to the odd-distance problem makes very little difference for the $\ell_2$ and $\ell_\infty$-norms. The case of the $\ell_1$-norm is less clear: the lower bound of \citet{GKS23} for $\odd_1(n)$ is larger than Kusner's conjectured upper bound for $e_1(n)$, but only by a constant multiplicative factor, and it is still smaller than the upper bound proven by \citet{AP03}.

In this note, we significantly improve the lower bound for $\odd_1(n)$, showing that there are odd-distance sets of size $2^n$ under the $\ell_1$-norm. Together with the upper bound for $e_1(n)$ of \citet{AP03}, this confirms in a strong sense that the equilateral and odd-distance problems behave very differently for the $\ell_1$-norm.

\begin{restatable}{theorem}{mainthm}\label{thm:main}
    For all positive integers\/ $n$,\/ $\odd_1(n) \geq 2^n$.
    Moreover, there is an $\ell_1$-odd-distance set $\cP \subseteq (\frac{1}{2} \cdot \ints)^n$ of size\/ $2^n$.
\end{restatable}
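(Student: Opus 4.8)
The plan is to rescale and convert everything into a congruence modulo $4$. If $\cP \sub (\tfrac12 \ints)^n$ then $\cX \coloneqq 2\cP \sub \ints^n$ and $\norm{\vect p - \vect q}_1 = \tfrac12\norm{2\vect p - 2\vect q}_1$, so $\cP$ is an $\ell_1$-odd-distance set of size $2^n$ if and only if the integer set $\cX$ has size $2^n$ and satisfies $\norm{\vect x - \vect y}_1 \equiv 2 \pmod 4$ for all distinct $\vect x, \vect y \in \cX$. It is worth recording why half-integers are unavoidable: for integer points $\norm{\vect x - \vect y}_1 \equiv s(\vect x) + s(\vect y) \pmod 2$, where $s(\vect x) \coloneqq \sum_i x_i$, so any three integer points contain two with $s$ of equal parity and hence even distance. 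A purely integral odd-distance set therefore has size at most $2$, and for the same reason naive product or ``doubling'' constructions fail: gluing two copies of a set always creates diagonal pairs at even distance. Defeating this parity obstruction is the heart of the matter.

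To produce $\cX$ I would look for $n$ generating vectors $\vect c_1, \dotsc, \vect c_n \in \ints^n$ and set $\cX = \set*{\sum_{j \in S} \vect c_j : S \sub [n]}$, so that $\cP = \set*{\tfrac12\sum_{j \in S} \vect c_j} \sub (\tfrac12\ints)^n$. The difference of two such points is $\sum_j \epsilon_j \vect c_j$ for some $\boldsymbol\epsilon \in \set{-1,0,1}^n$, and every nonzero $\boldsymbol\epsilon$ arises this way. Hence it suffices to find $\vect c_1, \dotsc, \vect c_n$ with
\[
    \Bigl\lVert \sum_{j} \epsilon_j \vect c_j \Bigr\rVert_1 \equiv 2 \pmod 4 \quad\text{for every nonzero } \boldsymbol\epsilon \in \set{-1,0,1}^n;
\]
distinctness of the $2^n$ points is then automatic, since the congruence forces every difference to be nonzero. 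As a sanity check, for $n = 2$ the columns of $\bigl(\begin{smallmatrix} 1 & 1 \\ 1 & -1\end{smallmatrix}\bigr)$ work: all of $\pm\vect c_1,\ \pm\vect c_2,\ \pm\vect c_1 \pm \vect c_2$ have $\ell_1$-norm exactly $2$.

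I would build the $\vect c_j$ by induction on $n$, adding one coordinate and one generating vector per step. Given vectors that work in dimension $n$, extend each old vector by a zero in the new coordinate (so combinations with $\epsilon_{n+1} = 0$ keep their old $\ell_1$-norm $\equiv 2$), and introduce a new vector $\vect c_{n+1} = (\vect a, h)$ whose first $n$ entries $\vect a$ all share one sign and are so large in absolute value that $\vect a$ dominates every combination $\vect w = \sum_{j \le n} \epsilon_j \vect c_j$ coordinatewise. Using that distances are symmetric, we may take $\epsilon_{n+1} \in \set{0,1}$, so the only new distances are $\norm{\vect w + \vect a}_1 + \abs h$, and domination linearises this to $\norm{\vect a}_1 + \langle \operatorname{sign}(\vect a), \vect w\rangle + \abs h$. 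The inductive step thus reduces entirely to controlling sign-weighted coordinate sums of the $\vect c_j$ modulo $4$. I would therefore carry the extra invariant that some fixed $\vect s \in \set{-1,1}^n$ satisfies $\langle \vect s, \vect c_j\rangle \equiv 0 \pmod 4$ for all $j$ (equivalently, after reflecting coordinates, that all coordinate sums are divisible by $4$); taking $\operatorname{sign}(\vect a) = \vect s$ makes $\langle \operatorname{sign}(\vect a), \vect w\rangle \equiv 0 \pmod 4$, and one tunes $\norm{\vect a}_1$ and $h$ (with the sign of $h$) so that every new distance is $\equiv 2 \pmod 4$ while $\vect s' = (\vect s, \pm 1)$ preserves the invariant.

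The main obstacle is exactly this modulo-$4$ bookkeeping. Unlike the parity condition, $\norm{\cdot}_1 \bmod 4$ is not additive across coordinates, so it must be controlled simultaneously for all $\approx 3^n$ combinations; the dominating coordinate is what linearises each inductive step, but it forces the divisibility invariant, which a short parity/pigeonhole argument shows cannot hold in dimensions $1$ or $2$. I therefore expect the delicate points to be (i)~establishing a valid base case in the smallest admissible dimension, which I would exhibit directly, and (ii)~checking carefully that the dominating coordinate really does linearise every relevant distance while leaving the invariant intact; the few small dimensions not reached by the induction would be handled by explicit constructions.
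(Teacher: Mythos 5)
Your approach is genuinely different from the paper's, but it contains a fatal flaw: the invariant you propose to carry through the induction is inconsistent with the mod-$4$ distance condition in \emph{every} dimension, not just in dimensions $1$ and $2$. Hence there is no admissible base case anywhere, and the induction can never start. To see this, suppose $\vect{c}_1, \dotsc, \vect{c}_n \in \ints^n$ satisfy $\norm{\sum_j \epsilon_j \vect{c}_j}_1 \equiv 2 \pmod{4}$ for all nonzero $\boldsymbol\epsilon \in \set{-1,0,1}^n$ together with your invariant $\langle \vect{s}, \vect{c}_j \rangle \equiv 0 \pmod{4}$ for all $j$; after reflecting coordinates we may assume $\vect{s} = (1, \dotsc, 1)$. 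For a nonzero combination $\vect{w} = \sum_j \epsilon_j \vect{c}_j$, let $P(\vect{w})$ and $N(\vect{w})$ denote the sums of $\abs{w_i}$ over the positive and over the negative coordinates, respectively. Then $P + N = \norm{\vect{w}}_1 \equiv 2 \pmod{4}$, while $P - N = \langle \vect{s}, \vect{w} \rangle \equiv 0 \pmod{4}$ by linearity from the invariant. Adding and subtracting, \emph{both} $P(\vect{w})$ and $N(\vect{w})$ are odd for every nonzero combination. In particular every nonzero combination has at least one odd coordinate, so the reductions of $\vect{c}_1, \dotsc, \vect{c}_n$ modulo $2$ are linearly independent over $\mathbb{F}_2$ and hence form a basis of $\mathbb{F}_2^n$. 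Consequently there is a nonempty $U \sub [n]$ with $\sum_{j \in U} \vect{c}_j \equiv (1, 0, \dotsc, 0) \pmod{2}$; this combination has exactly one odd coordinate, so its $\ell_1$-norm is odd, contradicting the requirement that it be $\equiv 2 \pmod{4}$. (Your own observations for $n = 1, 2$ are the first instances of this obstruction; it does not go away in dimension $3$ or beyond.)

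Note also that this is not an artifact of your particular choice of invariant: your framework forces it. Once the new vector $(\vect{a}, h)$ has $\vect{a}$ dominating all combinations $\vect{w}$ with a fixed sign pattern $\vect{s}$, the new distances are $\norm{\vect{a}}_1 \pm \langle \vect{s}, \vect{w} \rangle + \abs{h}$; requiring these to be $\equiv 2 \pmod{4}$ for $\vect{w} = 0$ and for $\vect{w} = \pm \vect{c}_j$ forces $\langle \vect{s}, \vect{c}_j \rangle \equiv 0 \pmod{4}$ for every $j$, i.e.\ exactly the impossible invariant. The underlying rigidity is your insistence on a subset-sum set $\set{\sum_{j \in S} \vect{c}_j \colon S \sub [n]}$, which makes every constraint a condition on the $3^n$ difference vectors simultaneously. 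The paper's induction avoids this: it first translates the $2^n$ points apart along the first coordinate by distinct even integers (which preserves the odd distances), and then replaces that coordinate of each point by one of two half-integer pairs, chosen depending on that point's own (spaced-out) first coordinate $b_i$ and confined to $[b_i/2 - 1/2,\, b_i/2 + 1/2]$, so that distances across different points collapse to $b_j - b_i$ plus the old distance. The resulting set is not an affine image of a hypercube already in dimension $3$, and abandoning that translation structure is precisely what lets the construction defeat the parity obstruction. To salvage your plan you would have to give up either the domination/fixed-sign linearisation or the subset-sum structure itself, at which point you are essentially led back to a construction like the paper's.
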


It is not possible to replace the half-integer lattice $(\frac{1}{2} \cdot \ints)^n$ by the integer lattice $\ints^n$ in the statement of \cref{thm:main}.
Indeed, there is not even an odd-distance set of size $3$ in $\ints^n$: every integer~$x$ satisfies $\abs{x} \equiv x \pmod{2}$ and so, for any integer points $\vect{a}, \vect{b}, \vect{c}\in\ints^n$, we have that $\norm{\vect{a} - \vect{b}}_1 + \norm{\vect{b} - \vect{c}}_1 \equiv \norm{\vect{a} - \vect{c}}_1 \pmod{2}$.

We further show that \cref{thm:main} is best possible in the case that all coordinates are half-integers.

\begin{restatable}{proposition}{bestprop}\label{prop:best}
    Let\/ $\cP \subseteq (\frac{1}{2}\cdot\ints)^n$ be an\/ $\ell_1$-odd-distance set.
    Then,\/ $\abs{\cP} \leq 2^n$.
\end{restatable}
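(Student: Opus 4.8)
The plan is to partition $\cP$ according to the fractional parts of the coordinates and to bound each part separately, leaning on the integer obstruction already noted in the discussion after \cref{thm:main}. For a point $\vect{p} \in (\frac{1}{2}\cdot\ints)^n$, define its \emph{type} $\vect{t}(\vect{p}) \in \{0,1\}^n$ by setting the $i$-th entry to $0$ if $p_i \in \ints$ and to $1$ if $p_i \in \frac{1}{2} + \ints$. Two points of the same type differ by an integer vector, so translating all points of a fixed type by a common vector places them in $\ints^n$ without changing any pairwise distance; since those distances are odd integers, the observation that $\ints^n$ contains no $\ell_1$-odd-distance set of size $3$ shows that each type occurs at most twice in $\cP$. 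As there are $2^n$ possible types, this alone gives $\abs{\cP} \le 2^{n+1}$, which is a factor of two away from the claimed bound.

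The key step is to show that far fewer than $2^n$ types can actually occur. I would examine the distance between two points $\vect{p}$ and $\vect{q}$ coordinatewise: the term $\abs{p_i - q_i}$ is an integer when $t_i(\vect{p}) = t_i(\vect{q})$ and lies in $\frac{1}{2} + \ints$ otherwise. Hence $\norm{\vect{p} - \vect{q}}_1$ is congruent modulo $1$ to half the Hamming distance between $\vect{t}(\vect{p})$ and $\vect{t}(\vect{q})$, and for this distance to be an (odd) integer the two types must differ in an even number of coordinates. Thus all types occurring in $\cP$ are pairwise at even Hamming distance, which forces them to share a common weight parity. Only $2^{n-1}$ of the $2^n$ vectors in $\{0,1\}^n$ have a given parity, so at most $2^{n-1}$ distinct types occur.

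Combining the two bounds yields $\abs{\cP} \le 2 \cdot 2^{n-1} = 2^n$, as required. The routine parts are the per-type bound (immediate from the integer obstruction) and the count of types of a fixed parity; the crux is the parity observation of the middle paragraph, namely that half-integrality forces all types into a single coset of the even-weight vectors. I expect the only point needing care is the modulo-$1$ bookkeeping that converts the requirement $\norm{\vect{p} - \vect{q}}_1 \in \ints$ into the combinatorial statement that any two occurring types differ in evenly many coordinates.
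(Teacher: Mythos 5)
Your proposal is correct and follows essentially the same route as the paper: your type map is exactly the paper's $\phi(\vect{p}) = (\mathds{1}_{\set{p_i \in \ints + 1/2}})_{i \in [n]}$, your even-Hamming-distance/parity observation is the paper's first claim (that all occurring types share a weight parity, giving at most $2^{n-1}$ types), and your at-most-two-points-per-type bound is the paper's second claim, which you obtain slightly more economically by translating each type class into $\ints^n$ and invoking the integer-lattice obstruction already noted after \cref{thm:main} rather than redoing the parity computation.
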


As a last result, we also note that, when considering odd-distance sets in the $\ell_1$-norm, we may restrict the coordinates to be rational numbers whose denominator is a power of $2$.

\begin{restatable}{proposition}{poweroftwoprop}\label{prop:powerof2}
    Let\/ $\cT \subseteq \rationals$ be the set of rational numbers whose denominators are powers of\/~$2$.
    For any\/ $\ell_1$-odd-distance set\/ $\cP \subseteq \reals^n$, there exists an\/ $\ell_1$-odd-distance set\/ $\cQ \subseteq \cT^n$ of the same size as $\cP$.
\end{restatable}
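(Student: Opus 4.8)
The plan is to prove the statement in two reductions: first I would replace $\cP$ by an $\ell_1$-odd-distance set of the same size all of whose coordinates are \emph{rational}, and then I would rescale this rational set by a suitable odd integer so that every coordinate becomes a dyadic rational while all distances stay odd. The key point making the second step work is that multiplying a whole configuration by an odd integer $\lambda$ multiplies every pairwise distance by $\lambda$, and $\lambda$ times an odd integer is again an odd integer; thus odd scalings preserve the class of $\ell_1$-odd-distance sets, while they can be used to clear odd factors from denominators. Scaling alone cannot cope with irrational coordinates, which is exactly why the preliminary passage to a rational set is needed.

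For the first reduction, write $\cP = \{\vect{p}_1, \dots, \vect{p}_m\}$ and freeze its \emph{combinatorial type}: for every pair $i,j$ and coordinate $k$ record the sign $\epsilon_{ijk} = \operatorname{sgn}(p_{i,k} - p_{j,k}) \in \{-1,0,1\}$. Let $\cR \subseteq (\reals^n)^m$ be the set of configurations $\vect{q} = (\vect{q}_1, \dots, \vect{q}_m)$ in which every \emph{strict} inequality is preserved, that is $\operatorname{sgn}(q_{i,k} - q_{j,k}) = \epsilon_{ijk}$ whenever $\epsilon_{ijk} \neq 0$; this $\cR$ is open and contains $\cP$. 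On $\cR$ each absolute value $\abs{q_{i,k} - q_{j,k}}$ agrees with the linear form $\epsilon_{ijk}(q_{i,k} - q_{j,k})$, so once we also impose the equalities $q_{i,k} = q_{j,k}$ for the pairs with $\epsilon_{ijk} = 0$, the distance $\norm{\vect{q}_i - \vect{q}_j}_1$ is given by an integer linear form $F_{ij}(\vect{q})$. Let $\cA$ be the affine subspace cut out by these equalities together with the equations $F_{ij}(\vect{q}) = \norm{\vect{p}_i - \vect{p}_j}_1$ for all $i<j$. Since the $\norm{\vect{p}_i - \vect{p}_j}_1$ are integers, $\cA$ is defined over $\rationals$ and contains $\cP$, so its rational points are dense in it. As $\cA \cap \cR$ is nonempty and open in $\cA$, it contains a rational configuration $\vect{q}^\ast$. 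By construction $\vect{q}^\ast$ has the same combinatorial type as $\cP$, so its $m$ points are pairwise distinct (for each pair some coordinate has $\epsilon_{ijk} \neq 0$) and $\norm{\vect{q}^\ast_i - \vect{q}^\ast_j}_1 = F_{ij}(\vect{q}^\ast) = \norm{\vect{p}_i - \vect{p}_j}_1$ is odd. Hence $\vect{q}^\ast$ yields a rational $\ell_1$-odd-distance set of size $m$.

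For the second reduction, let $\cP' \subseteq \rationals^n$ be this rational set. Writing each coordinate in lowest terms, let $\lambda$ be the least common multiple of the odd parts of all denominators appearing; then $\lambda$ is odd. Scaling every point by $\lambda$ removes every odd prime from the denominators, so all coordinates of $\lambda \cdot \cP'$ lie in $\cT$, while each distance becomes $\lambda$ times an odd integer, again odd. Thus $\cQ \coloneqq \lambda \cdot \cP'$ is the desired set. I expect the only genuine obstacle to be the bookkeeping in the first reduction: one must linearise the distances by freezing the sign pattern and verify that passing to a nearby rational point of the open region $\cR$ preserves both the size (distinctness of the points) and the exact odd-integer value of every distance. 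The odd-scaling step is then a short arithmetic argument.
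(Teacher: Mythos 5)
Your proof is correct, and its overall architecture is exactly that of the paper: first pass to a rational configuration realizing the same odd distances, then multiply through by the least common multiple of the odd parts of the denominators. That second, scaling step coincides with the paper's own deduction of the proposition from its \cref{lem:rational}. The difference lies in how the rationalization step is established. The paper first translates coordinates (reusing the spacing trick from the proof of \cref{thm:main}) so that distinct points differ by at least $2$ in every coordinate, which eliminates ties and creates room; it then writes the distances as a linear system with integer coefficients, passes to reduced row echelon form, and perturbs the free variables to rationals within an explicit bound ($<1/(C\abs{\cP}n)$) chosen so that all coordinate orderings, and hence all distances, are preserved. You instead leave the configuration where it is, handle ties $p_{i,k}=p_{j,k}$ by adding equality constraints to the affine subspace $\cA$, and invoke the qualitative fact that the rational points of a nonempty affine subspace defined over $\rationals$ are dense in it; density plus openness of the sign-pattern region $\cR$ then yields the rational configuration. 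At bottom these are the same argument --- the paper's free-variable perturbation is precisely a hands-on proof of the density statement you cite --- but your version is cleaner: it needs no preliminary translation and no explicit constants. What the paper's version buys in exchange is self-containedness (everything is reduced to row reduction, with no appeal to an external density fact) and the economy of reusing a device already introduced for the main theorem.
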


\section{Proofs}

To prove the lower bound on $\odd_1(n)$, we will inductively construct an odd-distance set of size~$2^n$. We will achieve this by splitting in two a coordinate of each point of an odd-distance set in~$\reals^n$. By doing this in two distinct ways, we replace each point with two points in $\reals^{n+1}$. This construction relies on the following simple lemma.

\begin{lemma}\label{lem:dim2}
    For every\/ $x \in \frac{1}{2} \cdot \ints$, there exists an odd-distance set\/ $\cP\subseteq(\frac{1}{2} \cdot \ints)^2$ of size $2$ such that the coordinates of each point sum to\/ $x$ and all coordinates are at least\/ $x/2 - 1/2$ and at most\/ $x/2 + 1/2$.
\end{lemma}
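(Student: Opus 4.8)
The plan is to reduce the statement to finding two suitable half-integers on a short interval. Since every point of $\cP$ must have coordinate sum $x$, each point is determined by its first coordinate: write the two points as $\vect{p} = (a, x - a)$ and $\vect{q} = (b, x - b)$ with $a, b \in \frac{1}{2} \cdot \ints$. The requirement that all coordinates lie in $[x/2 - 1/2, x/2 + 1/2]$ is then equivalent to $a, b \in [x/2 - 1/2, x/2 + 1/2]$, since $x - a$ lies in this interval exactly when $a$ does (the substitution $a \mapsto x - a$ fixes the interval), and similarly for $b$.

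The next step is to compute the pairwise distance. As the first coordinates differ by $a - b$ and the second coordinates differ by $-(a - b)$, we get $\norm{\vect{p} - \vect{q}}_1 = 2\abs{a - b}$. Because $a$ and $b$ both lie in an interval of length $1$, we have $\abs{a - b} \le 1$, so the only way for $2\abs{a - b}$ to be a positive odd integer is $\abs{a - b} = 1/2$, yielding distance exactly $1$. Hence the lemma reduces to the claim that the interval $I \coloneqq [x/2 - 1/2, x/2 + 1/2]$, which has length exactly $1$, always contains two half-integers that differ by exactly $1/2$.

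To verify this claim I would split into two cases according to whether $x$ is an integer or a genuine half-integer. If $x \in \ints$, then $x/2 \in \frac{1}{2} \cdot \ints$ and the three half-integers $x/2 - 1/2$, $x/2$, $x/2 + 1/2$ all lie in $I$; taking $a = x/2$ and $b = x/2 + 1/2$ works. If instead $x = m + 1/2$ for some $m \in \ints$, then $x/2 \notin \frac{1}{2} \cdot \ints$, but the half-integers $m/2$ and $m/2 + 1/2$ both lie in $I = [m/2 - 1/4, m/2 + 3/4]$ and differ by $1/2$, so we may take $a = m/2$ and $b = m/2 + 1/2$. Alternatively, one can give a uniform argument by letting $a$ be the largest half-integer not exceeding $x/2 + 1/2$ and setting $b = a - 1/2$; the spacing of the half-integers forces $a > x/2$, whence $b > x/2 - 1/2$, so both lie in $I$.

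In every case the set $\cP = \set{\vect{p}, \vect{q}}$ has size $2$, lies in $(\frac{1}{2} \cdot \ints)^2$, has both coordinate sums equal to $x$, meets the range constraint, and has its unique pairwise distance equal to the odd integer $1$. The only mild subtlety is this final case split (or the rounding argument), guaranteeing that a closed interval of length exactly $1$ always contains two half-integers at distance $1/2$; everything else is a direct computation.
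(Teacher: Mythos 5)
Your proof is correct and ends up with exactly the same construction as the paper: the paper's proof simply exhibits the two-point sets $\set{(x/2+1/2,\,x/2-1/2),\,(x/2,\,x/2)}$ for $x \in \ints$ and $\set{(x/2+1/4,\,x/2-1/4),\,(x/2-1/4,\,x/2+1/4)}$ for $x \in \ints + \frac{1}{2}$, which are precisely the points your case analysis produces. The extra observation you make — that the interval constraint forces the distance to be exactly $1$, so the construction is essentially unique — is a nice motivation but not needed for the verification.
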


\begin{proof}
    If $x\in\ints$, let
    \begin{equation*}
        \cP = \set[\bigg]{\biggl(\frac{x}{2}+\frac{1}{2}, \frac{x}{2} - \frac{1}{2}\biggr), \biggl(\frac{x}{2},\frac{x}{2}\biggr)}.
    \end{equation*}
    Otherwise, if $x \in \ints + \frac{1}{2}$, let
    \begin{equation*}
        \cP = \set[\bigg]{\biggl(\frac{x}{2} + \frac{1}{4}, \frac{x}{2} - \frac{1}{4}\biggr), \biggl(\frac{x}{2} - \frac{1}{4}, \frac{x}{2} + \frac{1}{4}\biggr)}.
    \end{equation*}
    Clearly, in both cases $\cP$ satisfies the claim.
\end{proof}

We may now prove our main result. Our strategy is to start with an odd-distance set \mbox{$\cP \subseteq (\frac{1}{2} \cdot \ints)^n$} and first space out the points in $\cP$ in the first dimension. We achieve this by translating the first coordinate of each point by some even integer while maintaining their order in the first dimension and the odd-distance property. Then, we replace each first coordinate by the odd-distance set in $(\frac{1}{2} \cdot \ints)^2$ given by \cref{lem:dim2}.
Because the first dimension was sufficiently spaced out in advance, this guarantees that the relative order of the points in the two new dimensions will be the same as their relative order in the old dimension.
As a result, the new set will be an odd-distance set in $(\frac{1}{2} \cdot \ints)^{n + 1}$ of size $2 \abs{\cP}$.

\mainthm*
\begin{proof}
    We will prove, by induction on $n$, that $(\frac{1}{2} \cdot \ints)^n$ contains an $\ell_1$-odd-distance set of size~$2^n$ for all $n\geq1$.
    For $n = 1$, the set $\cP = \set{0, 1}$ suffices.
    Now, suppose~$\cP$ is an odd-distance set in $(\frac{1}{2} \cdot \ints)^n$ of size $2^n$.
    We label the points of $\cP$ as $\vect{p}_1$, \ldots, $\vect{p}_{2^n}$ so that the first coordinate is increasing.
    That is, for all $i\in[2^n]$ we may write $\vect{p}_i = (a_i, \vect{v}_i)$, where $a_1 \leq a_2 \leq \dotsb \leq a_{2^n}$ and the vectors $\vect{v}_i$ are in $(\frac{1}{2} \cdot \ints)^{n - 1}$.

    For each $i\in[2^n]$, let $b_i \coloneqq a_i + 2i$ and $\vect{q}_i \coloneqq (b_i, \vect{v}_i)$.
    Note that $\cQ \coloneqq \set{\vect{q}_1, \dotsc, \vect{q}_{2^n}}$ is also an $\ell_1$-odd-distance set in $(\frac{1}{2} \cdot \ints)^n$.
    Indeed, for $1\leq i < j \leq 2^n$,
    \begin{align*}
        \norm{\vect{q}_j - \vect{q}_i}_1 & = (b_j - b_i) + \norm{\vect{v}_j - \vect{v}_i}_1 \\
        & = (a_j - a_i) + \norm{\vect{v}_j - \vect{v}_i}_1 + 2(j - i) \\
        & = \norm{\vect{p}_j - \vect{p}_i}_1 + 2(j - i),
    \end{align*}
    which is odd.

    By \cref{lem:dim2}, for each $i\in[2^n]$ there is an odd-distance set $\cC_i = \set{\vect{c}^{(1)}_i, \vect{c}^{(2)}_i} \subseteq (\frac{1}{2} \cdot \ints)^2$ where the coordinates of each point sum to $b_i$ and all coordinates are between \mbox{$b_i/2 - 1/2$} and \mbox{$b_i/2 + 1/2$}.
    For each $i\in[2^n]$, let $\vect{q}_i^{(1)} \coloneqq (\vect{c}_i^{(1)}, \vect{v}_i)$ and $\vect{q}_i^{(2)} \coloneqq (\vect{c}_i^{(2)}, \vect{v}_i)$.
    Finally, let $\cQ' \coloneqq \set{\vect{q}_1^{(1)}, \vect{q}_1^{(2)}, \dotsc, \vect{q}_{2^n}^{(1)}, \vect{q}_{2^n}^{(2)}}$.
    Clearly, since all the $b_i$ are distinct, $\cQ'$ is a subset of $(\frac{1}{2} \cdot \ints)^{n + 1}$ of size $2^{n + 1}$.
    It remains to show that it is an $\ell_1$-odd-distance set.

    First note that, for all $i\in[2^n]$,
    \begin{equation*}
        \norm{\vect{q}_i^{(1)} - \vect{q}_i^{(2)}}_1 = \norm{\vect{c}_i^{(1)} - \vect{c}_i^{(2)}}_1 + \norm{\vect{v}_i - \vect{v}_i}_1 = \norm{\vect{c}_i^{(1)} - \vect{c}_i^{(2)}}_1,
    \end{equation*}
    which is odd as $\cC_i$ is an odd-distance set.
    Now fix $1\leq i < j \leq 2^n$ and consider the distance between $\vect{q}_i^{(r)}$ and $\vect{q}_j^{(s)}$, where $r, s \in \set{1, 2}$.
    Recall, from our choice of the $b_i$, that $b_j \geq b_i + 2$.
    Since the coordinates of $\vect{c}_i^{(r)}$ are at most $b_i/2 + 1/2$ and the coordinates of $\vect{c}_j^{(s)}$ are at least $b_j/2 - 1/2$, both coordinates of $\vect{c}_i^{(r)}$ are bounded from above by both coordinates of $\vect{c}_j^{(s)}$.
    Since the sum of the coordinates of $\vect{c}_i^{(r)}$ is $b_i$ and the sum of the coordinates of $\vect{c}_j^{(s)}$ is $b_j$, it follows that
    \begin{equation*}
        \norm{\vect{c}_j^{(s)} - \vect{c}_i^{(r)}}_1 = b_j - b_i.
    \end{equation*}
    Thus,
    \begin{equation*}
        \norm{\vect{q}_j^{(s)} - \vect{q}_i^{(r)}}_1 = \norm{\vect{c}_j^{(s)} - \vect{c}_i^{(r)}}_1 + \norm{\vect{v}_j - \vect{v}_i}_1 = (b_j - b_i) + \norm{\vect{v}_j - \vect{v}_i}_1 = \norm{\vect{q}_j - \vect{q}_i}_1,
    \end{equation*}
    which is odd since $\cQ$ is an odd-distance set, as required.
\end{proof}

If we restrict a point set to having half-integer coordinates, we show that our construction is optimal.
We prove this with a simple pigeonhole argument.

\bestprop*
\begin{proof}
    Let $\phi \colon \cP \to \set{0, 1}^n$ be defined by setting $\phi(\vect{p}) \coloneqq (\mathds{1}_{\set{p_i \in \ints + 1/2}})_{i \in [n]}$.

    First, we note that there cannot be two points $\vect{p}, \vect{q} \in \cP$ with $\norm{\phi(\vect{p})}_1 \nequiv \norm{\phi(\vect{q})}_1 \Mod{2}$.
    Indeed, in such a case, there would be an odd number of coordinates $i\in[n]$ such that $\abs{p_i-q_i}\in\ints+1/2$, which implies that $\norm{\vect{p}-\vect{q}}_1$ is not an integer.
    In particular, $\abs{\phi(\cP)} \leq 2^{n-1}$.

    Secondly, we claim that there cannot be three points $\vect{a}, \vect{b}, \vect{c}\in\cP$ with $\phi(\vect{a}) = \phi(\vect{b}) = \phi(\vect{c})$.
    Indeed, otherwise $a_i - b_i$, $b_i - c_i$, and $c_i - a_i$ are integers for all $i\in[n]$, and so
    \begin{align*}
        \norm{\vect{a} - \vect{b}}_1 = \sum_{i=1}^n \abs{a_i - b_i} & \equiv\sum_{i=1}^n (a_i - b_i) \pmod{2}, \\
        \norm{\vect{b} - \vect{c}}_1 = \sum_{i=1}^n \abs{b_i - c_i} & \equiv \sum_{i=1}^n (b_i - c_i) \pmod{2}, \\
        \norm{\vect{a} - \vect{c}}_1 = \sum_{i=1}^n \abs{a_i - c_i} & \equiv \sum_{i=1}^n (a_i - c_i) \pmod{2}.
    \end{align*}
    Adding the first two expressions and comparing with the last one, we conclude that
    \begin{equation*}
        \norm{\vect{a} - \vect{b}}_1 + \norm{\vect{b}-\vect{c}}_1\equiv\norm{\vect{a}-\vect{c}}_1\pmod{2},
    \end{equation*}
    which is impossible, as the sum of two odd numbers is even.
    Thus, $\abs{\cP} \leq 2 \cdot \abs{\phi(\cP)} \leq 2^n$.
\end{proof}

Finally, we show that we may assume all points in $\ell_1$-odd-distance sets $\cP$ have rational coordinates.
Indeed, the pairwise distances between the points of $\cP$ can be expressed by a system of linear equations on the coordinates of the points of $\cP$.
Solutions to this system of linear equations can be characterised by a set of free variables whose choice determines all other variables.
If we now replace all coordinates corresponding to free variables by nearby rational numbers, this determines all remaining coordinates, and those coordinates will also be rational since the coefficients of the linear equations were rational.
This gives a point set with rational coefficients, and if the order of the coordinates did not change, then the distances between the points remain the same.
So, the point set will be an odd-distance set.

\begin{lemma}\label{lem:rational}
For any $\ell_1$-odd-distance set\/ $\cP\subseteq\reals^n$, there exists an odd-distance set $\cQ\subseteq\rationals^n$ of the same size as $\cP$. 
\end{lemma}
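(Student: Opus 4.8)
The plan is to reduce the problem to linear algebra over $\rationals$ by fixing the \emph{combinatorial type} of the configuration, so that each $\ell_1$-distance becomes a single linear form in the coordinates. Write $\cP = \set{\vect{p}_1, \dotsc, \vect{p}_m}$ with $\vect{p}_k = (p_{k,1}, \dotsc, p_{k,n})$, and let $d_{kl} \coloneqq \norm{\vect{p}_k - \vect{p}_l}_1$ be the prescribed odd integers. For each pair $k < l$ and each coordinate $i \in [n]$, record the sign $\sigma_{kl,i} \in \set{+1, -1}$ of $p_{k,i} - p_{l,i}$, breaking ties by setting $\sigma_{kl,i} = +1$ when $p_{k,i} = p_{l,i}$. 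The point of this is that $\abs{p_{k,i} - p_{l,i}} = \sigma_{kl,i}(p_{k,i} - p_{l,i})$, whence $\sum_i \sigma_{kl,i}(p_{k,i} - p_{l,i}) = d_{kl}$; replacing the absolute values by these fixed signs turns the distance constraints into genuine linear equations.

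First I would introduce the variable vector $\vect{x} = (x_{k,i})_{k \in [m],\, i \in [n]} \in \reals^{mn}$ and let $V \subseteq \reals^{mn}$ be the affine subspace cut out by the equations $\sum_{i=1}^n \sigma_{kl,i}(x_{k,i} - x_{l,i}) = d_{kl}$ for all $k < l$, \emph{together with} the equations $x_{k,i} = x_{l,i}$ for every $k < l$ and every coordinate $i$ with $p_{k,i} = p_{l,i}$. By the choice of the $\sigma_{kl,i}$, the original configuration (that is, $x_{k,i} = p_{k,i}$) lies in $V$, so $V$ is non-empty. All coefficients lie in $\set{+1, -1, 0}$ and all right-hand sides $d_{kl}$ are integers, so $V$ is defined over $\rationals$; a consistent rational linear system has a rational solution, and its solution set is a translate of the rational kernel of the system. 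Hence the rational points $V \cap \rationals^{mn}$ are \emph{dense} in $V$.

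Next I would separate out the strict part of the combinatorial type: let $U \subseteq V$ be the subset on which $x_{k,i} > x_{l,i}$ whenever $p_{k,i} > p_{l,i}$ and $x_{k,i} < x_{l,i}$ whenever $p_{k,i} < p_{l,i}$. These are finitely many strict, hence open, inequalities, so $U$ is relatively open in $V$ and contains $\cP$. Since $U$ is a non-empty relatively open subset of $V$ and rational points are dense in $V$, there is a rational point $\vect{x}^\ast = (q_{k,i}) \in U \cap \rationals^{mn}$; set $\cQ \coloneqq \set{\vect{q}_1, \dotsc, \vect{q}_m} \subseteq \rationals^n$ with $\vect{q}_k \coloneqq (q_{k,1}, \dotsc, q_{k,n})$. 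Membership in $U$ guarantees $\sigma_{kl,i}(q_{k,i} - q_{l,i}) \geq 0$ for all $k < l$ and $i$ (the strict inequalities by definition of $U$, the tie coordinates because $q_{k,i} = q_{l,i}$ is forced inside $V$), so $\abs{q_{k,i} - q_{l,i}} = \sigma_{kl,i}(q_{k,i} - q_{l,i})$ and the defining equations of $V$ yield $\norm{\vect{q}_k - \vect{q}_l}_1 = \sum_i \sigma_{kl,i}(q_{k,i} - q_{l,i}) = d_{kl}$. Thus every pairwise distance in $\cQ$ equals the odd integer $d_{kl}$; in particular all distances are positive, so the $\vect{q}_k$ are distinct and $\abs{\cQ} = m = \abs{\cP}$.

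The one genuinely delicate point, and the reason I would bake the equalities $x_{k,i} = x_{l,i}$ into $V$ rather than merely perturb $\cP$, is the treatment of coordinate ties. If $p_{k,i} = p_{l,i}$ and an arbitrary small perturbation were allowed, the tie could break with the \emph{wrong} sign, giving $\abs{q_{k,i} - q_{l,i}} \neq \sigma_{kl,i}(q_{k,i} - q_{l,i})$, so that the linear form would no longer compute the $\ell_1$-distance. Pinning these coordinates equal removes this hazard, while the strictly ordered coordinates are safe because strict inequalities survive sufficiently small perturbations; density of the rational points in $V$ then supplies the required rational configuration, and everything else is routine.
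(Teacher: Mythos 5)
Your proof is correct, and it linearizes the problem exactly as the paper does --- fixing the sign pattern of each coordinate difference so that every $\ell_1$-distance becomes a rational linear equation --- but the two arguments diverge in how they extract a rational solution respecting that sign pattern. The paper first translates coordinates (reusing the spacing trick from the proof of \cref{thm:main}) so that distinct points differ by at least $2$ in every coordinate; this destroys all ties, and afterwards an explicit perturbation argument suffices: writing the system in reduced row echelon form, the free variables are given rational values within $1/(C\abs{\cP}n)$ of the originals, forcing every dependent coordinate to move by less than $1$, which preserves the strict order and hence all the signs. You instead keep the ties and encode them as equality constraints in the affine subspace $V$, then invoke the abstract fact that rational points are dense in a non-empty affine subspace defined over $\rationals$, together with relative openness of the finitely many strict inequalities. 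Both routes are sound, and your identification of the tie coordinates as the delicate point is exactly right: the paper's perturbation would fail there, which is precisely why it translates first. Your version is somewhat cleaner and more self-contained: it avoids the preprocessing step (and consequently produces a rational set realizing the \emph{same} distances as $\cP$, whereas the paper's translated set realizes different, though still odd, distances), and it trades explicit error management for a one-line density argument. Conversely, the paper's version is more elementary, using only finite explicit bounds. The one thing worth spelling out in your write-up is the standard fact you use implicitly: a rational linear system that is solvable over $\reals$ is solvable over $\rationals$ (rank is field-independent), and its kernel admits a rational basis --- this is exactly where the paper's reduced-row-echelon-form computation reappears in disguise.
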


\begin{proof}
    By iteratively translating coordinates as in the proof of \cref{thm:main}, we may assume that \mbox{$\abs{p_i - q_i} \ge 2$} for all distinct points $\vect{p}, \vect{q} \in \cP$ and all $i\in[n]$.
    Let $s_i^{\vect{p},\vect{q}} \coloneqq 1$ if $p_i > q_i$ and $s_i^{\vect{p},\vect{q}} \coloneqq -1$ otherwise.
    Then,
    \[
        \norm{\vect{p} - \vect{q}}_1 = \sum_{i = 1}^n s_i^{\vect{p},\vect{q}} (p_i - q_i).
    \]
    Write $d_{\vect{p},\vect{q}} \coloneqq \norm{\vect{p} - \vect{q}}_1$ and introduce a variable $x_{\vect{p},i}$ for all $\vect{p} \in \cP$ and all $i\in[n]$.
    This means that the system of linear equations
    \[
        \sum_{i = 1}^n s_i^{\vect{p},\vect{q}} \left(x_{\vect{p},i} - x_{\vect{q},i}\right) = d_{\vect{p},\vect{q}}
    \]
    for all distinct $\vect{p}, \vect{q} \in \cP$ has a solution given by $x_{\vect{p},i} = p_i$ for all $\vect{p} \in \cP$ and all $i\in[n]$.
    
    Since all coefficients and all $d_{\vect{p},\vect{q}}$ are integers, the reduced row echelon form of this system of linear equations only has rational coefficients.
    Thus, in the reduced row echelon form, there exists a set of indices $\cF \subseteq \cP \times [n]$ such that $x_f$ is a free variable for all $f \in \cF$ while $x_g$ is a dependent variable for all $g \notin \cF$, with $x_g = a_g + \sum_{f \in \cF} b_{f,g} x_f$ for some $a_g, b_{f,g} \in \rationals$.

    Let $C \coloneqq \max_{f \in \cF, g \notin \cF} \abs{b_{f,g}}$.
    For each $(\vect{p},i) \in \cF$, choose $y_{\vect{p},i} \in \rationals$ so that $\abs{y_{\vect{p},i} - p_i} < 1/(C \abs{\cP} n)$.
    For each $(\vect{p},i) \notin \cF$ let $y_{\vect{p},i} = a_{\vect{p},i} + \sum_{f \in \cF} b_{f,(\vect{p},i)} y_f$, and note that $y_{\vect{p},i} \in \rationals$ and
    \begin{align*}
        \abs{y_{\vect{p},i} - p_i} & = \abs*{\left(a_{\vect{p},i} + \sum_{(\vect{q},j) \in \cF} b_{(\vect{q},j),(\vect{p},i)} y_{\vect{q},j}\right) - \left(a_{\vect{p},i} + \sum_{(\vect{q},j) \in \cF} b_{(\vect{q},j),(\vect{p},i)} q_j\right)} \\
        & \le \sum_{(\vect{q},j) \in \cF} \abs*{b_{(\vect{q},j),(\vect{p},i)} \left(y_{\vect{q},j} - q_j\right)} < \frac{\abs{\cF} C}{C \abs{\cP} n} \le 1,
    \end{align*}
    where the first equality used the fact that $x_{\vect{p},i} = p_i$ is a solution to the system of equations.
    
    For each $\vect{p} \in \cP$, let $\vect{p}' \in \rationals^n$ be the point with $p'_i = y_{\vect{p},i}$.
    Consider two distinct points $\vect{p}, \vect{q} \in \cP$.
    For each $i$, if $p_i < q_i$, then $p_i + 2 \le q_i$ by assumption, and since $\abs{p'_i - p_i} < 1$ and $\abs{q'_i - q_i} < 1$, this implies that $p'_i < q'_i$.
    Similarly, if $p_i > q_i$, then $p'_i > q'_i$.
    Therefore,
    \[
        \norm{\vect{p}' - \vect{q}'}_1 = \sum_{i = 1}^n s_i^{\vect{p},\vect{q}} (p'_i - q'_i) = d_{\vect{p},\vect{q}},
    \]
    where the last equality uses the fact that $x_{\vect{p},i} = p'_i$ is a solution to the system of equations.
    In particular, the distance between $\vect{p}'$ and $\vect{q}'$ is odd, and so $\cQ = \set{\vect{p}' \colon \vect{p} \in \cP} \subseteq \rationals^n$ is an odd-distance set of the same size as $\cP$.
\end{proof}

\Cref{prop:powerof2} is now an easy consequence since, if we scale all coordinates of an odd-distance set by a fixed odd integer, the resulting set is still an odd-distance set.

\poweroftwoprop*
\begin{proof}
By \cref{lem:rational}, there exists an odd-distance set $\cQ\subseteq\rationals^n$ of the same size as $\cP$. Let $C$ be the least common multiple of all odd integers that divide the denominator of any coordinate of any point of $\cQ$.
Clearly, $C$ is an odd integer, and $\cQ' \coloneqq \set{C \vect{q} \colon \vect{q} \in \cQ} \subseteq \cT^n$.
Moreover, for all distinct points $C \vect{p}, C \vect{q} \in \cQ'$ we have $\norm{C \vect{p} - C \vect{q}}_1 = C \norm{\vect{p} - \vect{q}}_1$, which is an odd integer as a product of two odd integers, and so $\cQ'$ is an odd-distance set.
\end{proof}

\section*{Acknowledgement}

The research leading to the results in this note began during the \emph{Early Career Researchers in Combinatorics} (ECRiC) workshop, 15-19 July 2024, funded by the International Centre for Mathematical Sciences, Edinburgh.

\end{document}